\def\thm@space@setup{%
  \thm@preskip=0.5em\thm@postskip=\thm@preskip%
}
\newtheoremstyle{named}{}{}{\\itshape}{}{\bfseries}{.}{.5em}{\thmnote{#3's }#1}
\theoremstyle{named}
\theoremstyle{plain}
\newtheorem{thm}{Theorem}[section]
\newtheorem{prop}[thm]{Proposition}
\newtheorem{lem}[thm]{Lemma}
\newtheorem{cor}[thm]{Corollary}
\theoremstyle{definition}
\newtheorem{exmpl}[thm]{Example}
\theoremstyle{remark}
\newcommand{\Hom}{\mathrm{Hom}}
\newcommand{\CC}{\mathbb{C}}
\newcommand{\QQ}{\mathbb{Q}}
\newcommand{\ZZ}{\mathbb{Z}}
\newcommand{\GL}[1]{\mathrm{GL}_{#1}}
\newcommand{\tr}{\mathrm{tr}}
\newcommand{\cM}[1]{\mathcal{M}_{#1}}
\newcommand{\frob}[1]{\mathrm{Fr}_{#1}}
\newcommand{\mGal}[1]{\mathcal{G}_{#1}}
\newcommand{\mc}[1]{\mathcal{#1}}
\newcommand{\ol}[1]{\overline{#1}}
\newcommand{\Tr}[1]{\mathrm{Tr}_{#1}}
\begin{document}

\title{Recognizing Galois representations of K3 surfaces}
\author{Christian Klevdal}
\thanks{The author was partially supported by NSF DMS 1246989}
\address{Department of Mathematics\\
  University of Utah\\
 Salt Lake City, Utah, 84102.}
\email[C.~Klevdal]{klevdal@math.utah.edu}

\maketitle

\begin{abstract}
Under the assumption of the Hodge, Tate and Fontaine-Mazur conjectures we give a criterion for a compatible system of $\ell$-adic 
representations of the absolute Galois group of a number field to be isomorphic to the second cohomology of a K3 surface. This is 
achieved by producing a motive $M$ realizing the compatible system, using a local to global argument for quadratic forms to produce a K3 
lattice in the Betti realization of $M$ and then applying surjectivity of the period map for K3 surfaces to obtain a complex K3 surface. 
Finally we use a very general descent argument to show that the complex K3 surface admits a model over a number field. 
\end{abstract}

\section{Introduction}
This paper grew out of an attempt to answer a question on the section conjecture for moduli spaces of K3 surfaces, inspired by recent 
work of Patrikis, Voloch and Zarhin \cite{PVZ}. In this paper, the authors study the section conjecture for the moduli space of principally 
polarized abelian varieties. The section conjecture for a (geometrically connected) variety $X$ over a number field $K$ relates the set of 
rational points $X(K)$ with the sections of the fundamental sequence
	\[ 1 \to \pi_1(\ol{X}) \to \pi_1(X) \to \Gamma_K \to 1 \]
(we omit base points for the \'etale fundamental group and, for any field $K$, write $\Gamma_K = \mathrm{Gal}(\ol{K}/K)$ for the 
absolute Galois group with a fixed algebraic closure $\ol{K}$, and $\ol{X}$ the basechange of $X$ to $\ol{K}$). 
Given a rational point $x \colon \mathrm{Spec}(K) \to X$, functoriality of $\pi_1$ gives a section $\Gamma_K \to \pi_1(X)$ and this defines a 
map $\sigma_X \colon X(K) \to H(K, X)$ where $H(K,X)$ is the set of sections up to conjugation by $\pi_1(\ol{X})$. The section conjecture 
for $X$ states that the map $\sigma_X$ is a bijection. Of course, for general $X$ this map is far from a bijection, so we would want to find a 
class of varieties suitably determined by their fundamental groups. These are the so called anabelian varieties introduced by Grothendieck 
in his letter to Faltings \cite{Grothendieck}. Grothendieck suggested that hyperbolic curves, moduli spaces of curves and (less emphatically) 
moduli spaces of abelian varieties should all be anabelian. 

It is known that moduli spaces $\mc{A}_g$ of abelian varieties should not be anabelian by results of Ihara and Nakamura \cite{Ihara}. 
However, theorem 1.1 of \cite{PVZ} shows that under the assumption of well known motivic conjectures, a large subset of sections 
$S_0(K,\mc{A}_g) \subset H(K,\mc{A}_g)$ is contained in the image of $\sigma_{\mc{A}_g}$, where the sections $S_0(K, \mc{A}_g)$ are 
those coming from points locally. The authors are able to prove this by reducing to a question about Galois representations. More 
specifically there is a short exact sequence 
\begin{equation}\label{AbVarSeq}
\begin{tikzcd}
1 \arrow[r] & \pi_1(\ol{\mc{A}_g})  \arrow[r] \arrow{d}{\cong} & \pi_1(\mc{A}_g) \arrow[r]  \arrow[d] & \Gamma_K \arrow[r]  \arrow[d] & 1 \\
1 \arrow[r] & \mathrm{Sp}_{2g}(\hat{\ZZ})  \arrow[r] & \mathrm{GSp}_{2g}(\hat{\ZZ})  \arrow[r] & \hat{\ZZ}^\times  \arrow[r] & 1 
\end{tikzcd}
\end{equation}
Given a section $s \colon \Gamma_K \to \pi_1(\mc{A}_g)$ composition with the middle arrow gives a collection of $\ell$-adic 
representations $\{ \rho_\ell \colon \Gamma_K \to \mathrm{GSp}_{2g}(\ZZ_\ell) \}_{\ell}$. The fact that the left arrow is an isomorphism 
shows that the sections $H(K, \mc{A}_g)$ are determined by their associated $\ell$-adic representations. Then the authors use well known 
conjectures to find conditions on a collection of $\ell$-adic representations $\{ \rho_\ell \}$ that ensure they are isomorphic to the $\ell$-adic 
Tate module of an abelian variety \cite[Thm 3.3]{PVZ}. The proof of theorem 3.3 proceeds by using these conjectures to find a motive 
underlying the collection of $\ell$-adic representations. Taking Betti realization of this motive gives a Hodge structure that has the Hodge 
weights of an abelian variety. Using Riemann's theorem one can show that this Hodge structure is isomorphic to the Hodge structure on the 
Tate module of an abelian variety. 

One might ask whether \cite[Thm 3.3]{PVZ} can be generalized to other classes of varieties. In order for the above method to work, such a 
class of varieties would require an analogue of Riemann's theorem, which gives a criterion for an abstract Hodge structure to appear in the 
(co)homology of a variety. After abelian varieties, the most natural class of varieties with this property is K3 surfaces, where surjectivity of 
the period map is known. Our main theorem is precisely the analogue for K3 surfaces of \cite[Thm 3.3]{PVZ}. We find a set of conditions on 
a weakly compatible system of $\ell$-adic representations (for all $\ell$) to relate them to the weakly compatible system 
$H^2(X_{\ol{K}},\QQ_\ell)$ for varying primes $\ell$. We now make this precise.

Let $K$ be a number field and $S$ a finite set of rational primes. A collection of Galois representations $\{ \rho_\ell \colon \Gamma_K 
\to \GL{n}(\QQ_\ell)\}_{\ell \not\in S}$ is said to be weakly compatible if there exists a finite set $\Sigma$ of finite places of $K$ satisfying
\begin{enumerate}[I.]
	\item For each $\ell \not\in S$ the representation $\rho_\ell$ is unramified outside $\Sigma_\ell \cup \Sigma$ where $\Sigma_\ell$ 
	is the set of places of $K$ lying over $\ell$. 
	\item For each $\ell \not\in S$ and each place $v$ of $K$ not in $\Sigma_\ell \cup \Sigma$, the characteristic polynomial of 
	$\rho_\ell(\frob{v})$ has rational coefficients and is independent of $\ell$ (here $\frob{v}$ is a geometric Frobenius element at $v$). 
\end{enumerate}
Now let $\Lambda = U^{\oplus 3} \oplus E_8(-1)^{\oplus 2}$ be the K3 lattice. Fix a basis $e, f$ of the first copy of the hyperbolic plane 
$U$ such that $(e)^2 = (f)^2 = 0$ and $(e,f) = 1$. We prove the following

\begin{thm}\label{main}
Let $K$ be a number field. Assume the Hodge, Tate and Fontaine-Mazur conjectures. Let 
$\{ \rho_\ell \colon \Gamma_K \to O(\Lambda \otimes \QQ_\ell) \}$ be a weakly compatible system (with $S$ empty) of semisimple 
representations such that 
\begin{enumerate}
	\item There exist an integer $d > 0$ such that for all but finitely many primes $\ell$, $(e + df) \otimes 1 \in (\Lambda \otimes \QQ_\ell(1))^{\Gamma_K}$  
	\item For some $\ell_0$, $\rho_{\ell_0}$ is de Rham at all $v | \ell_0$.  
	\item For some $\ell_1$, $\mathrm{End}_{\Gamma_K}(\rho_{\ell_1}) = \QQ_{\ell_1} \oplus \QQ_{\ell_1}$.
	\item For some $\ell_2$ and some $v | \ell_2$, $\rho_{\ell_2}|_{\Gamma_{K_v}}$ is de Rham with Hodge-Tate weights $0,1,2$ 
		of multiplicities $1,20,1$. 
\end{enumerate}
Then there is a K3 surface $X$ over a finite extension $L$ of $K$ such that $\rho_\ell|_{\Gamma_{L}} \cong H^2(X_{\ol{L}}, \QQ_\ell)$ for all $\ell$. 
\end{thm}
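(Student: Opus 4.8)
The plan is to follow the four-step strategy announced in the abstract: build a motive $M$ over $K$ realizing the system, transport the K3 lattice onto its Betti realization, invoke surjectivity of the period map to produce a complex K3 surface, and finally descend that surface to a number field.

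First I would produce the motive. By hypothesis (2) the representation $\rho_{\ell_0}$ is de Rham at all places above $\ell_0$ and, being part of a weakly compatible system unramified outside a finite set, it is geometric in the sense of Fontaine--Mazur. Granting the Fontaine--Mazur conjecture (together with Tate, to rigidify morphisms between realizations), $\rho_{\ell_0}$ is the $\ell_0$-adic realization of a motive $M$ over $K$, which I take in a Tannakian category of motives for motivated/absolute Hodge cycles so that the formalism of realizations and comparison isomorphisms is available. Weak compatibility, semisimplicity and the Tate conjecture then force every $\rho_\ell$ to be the $\ell$-adic realization of this same $M$: the characteristic polynomials of $\rho_\ell(\frob{v})$ and of the $\ell$-adic realization of $M$ at $\frob{v}$ agree for almost all $v$, and by Chebotarev plus semisimplicity the representations coincide. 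Hypothesis (4) identifies the Hodge--Tate weights as $0,1,2$ with multiplicities $1,20,1$, so via the de Rham comparison the Betti realization $H_B := H_B(M)$ is a pure weight-$2$ Hodge structure of K3 type, with $h^{2,0}=1$, $h^{1,1}=20$, $h^{0,2}=1$. Hypothesis (3), that $\mathrm{End}_{\Gamma_K}(\rho_{\ell_1}) = \QQ_{\ell_1} \oplus \QQ_{\ell_1}$, says $M$ has exactly two simple constituents with no extra endomorphisms; concretely this is the signature of a very general polarized K3 surface, of Picard number one and with transcendental part free of complex multiplication, and I record it for use in the rigidity needed at the descent stage.

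Next I would install the K3 lattice. The orthogonal pairing implicit in $\rho_\ell \colon \Gamma_K \to O(\Lambda \otimes \QQ_\ell)$ endows $H_B \otimes \QQ$ with a nondegenerate symmetric $\QQ$-bilinear form whose base change to each $\QQ_\ell$ is, by construction, isometric to $\Lambda \otimes \QQ_\ell$. The Hodge structure of K3 type forces the signature of the form at the real place to be $(3,19)$, matching $\Lambda \otimes \RR$. By the Hasse--Minkowski theorem the $\QQ$-quadratic space $(H_B \otimes \QQ, \langle\,,\rangle)$ is therefore isometric to $\Lambda \otimes \QQ$; fixing such an isometry and pulling $\Lambda$ back produces a lattice in $H_B \otimes \QQ$ isometric to the K3 lattice and compatible with the Hodge structure. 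Hypothesis (1) provides a Galois-invariant class $e + df$ in $\Lambda \otimes \QQ_\ell(1)$; by the Tate and Hodge conjectures this class is algebraic, lies in the $(1,1)$-part, and has positive square $(e+df)^2 = 2d$, so it serves as a polarization. The line $H^{2,0} \subset \Lambda \otimes \CC$, orthogonal to $e+df$, then determines a point of the polarized period domain $\Omega_{2d}$, and surjectivity of the period map for K3 surfaces yields a complex K3 surface $X_0$ together with a marking identifying $H^2(X_0,\ZZ)$, as a polarized Hodge structure, with the lattice just constructed; in particular $H^2(X_{0},\QQ_\ell) \cong \rho_\ell$ as quadratic $\QQ_\ell$-spaces.

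The main obstacle is the final descent, since $X_0$ is so far only a complex surface and the identification of $\ell$-adic cohomologies is not yet Galois-equivariant. I would view $X_0$ as a point $m_0 = [X_0]$ of the moduli space $\mc{M}_{2d}$ of polarized K3 surfaces of degree $2d$, a quasi-projective variety over $\QQ$. For $\sigma \in \mathrm{Aut}(\CC/K)$ the conjugate surface $X_0^\sigma$ has, canonically, $H^2(X_0^\sigma,\QQ_\ell) \cong H^2(X_0,\QQ_\ell)$, since \'etale cohomology is insensitive to conjugation of the base; thus every conjugate again realizes the \'etale class of $M$. The crux is to leverage this, together with the rigidity coming from the Torelli theorem and hypothesis (3), to show that the $\mathrm{Aut}(\CC/K)$-orbit of $m_0$ is countable: a $\CC$-point of a $\ol{\QQ}$-variety with countable orbit is defined over $\ol{\QQ}$, and finiteness of the orbit then exhibits $m_0$ over a number field $L \supseteq K$. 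Because polarized K3 surfaces have finite automorphism groups, the field-of-moduli obstruction is controlled and $X_0$ itself descends (after enlarging $L$) to a polarized K3 surface $X$ over $L$ whose motive is $M|_{\Gamma_L}$. Finally, $H^2(X_{\ol L},\QQ_\ell)$ is by construction the $\ell$-adic realization of $M|_{\Gamma_L}$, which equals $\rho_\ell|_{\Gamma_L}$; semisimplicity and the Chebotarev comparison of Frobenius polynomials upgrade this to a $\Gamma_L$-equivariant isomorphism for every $\ell$, completing the proof. I expect the genuinely delicate point to be the countability of the conjugate orbit, where the transcendence of the period must be tamed using that $H_B$ underlies a motive defined over the number field $K$.
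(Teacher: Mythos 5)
Your outline matches the paper's (motive via Fontaine--Mazur and Tate, a $\QQ$-quadratic space isometric to $\Lambda\otimes\QQ$, surjectivity of the period map, descent via countability of conjugates), but there is a genuine gap at each of the two places where the paper does its real work. First, the signature. You assert that ``the Hodge structure of K3 type forces the signature of the form at the real place to be $(3,19)$.'' It does not: the pairing on $H_B(M)$ is an abstract motivic pairing, and a Hodge decomposition by itself puts no constraint on the signature of a Galois-equivariant form --- you would need to know that the form is, up to a rational scalar, a polarization form, and that the scalar has the right sign. This is exactly what the paper proves, and it consumes hypotheses (1) and (3): using them together with the Tate conjecture one splits $M = \mathbf{1}(-1)\oplus M'$ with $M'$ absolutely irreducible; one embeds $M'$ into $h^k(X)(j)$ for some smooth projective $X$ and uses $\dim_\QQ(\mathrm{Sym}^2\rho'^\vee)^{\mGal{K}}=1$ to identify the motivic pairing on $M'$, up to scalar, with an intersection pairing; the Hodge index theorem then makes the form definite on the $19$-dimensional space $H^{1,1}(M')$; the product formula for Hasse invariants (lemma \ref{k3 lattice}) restricts the possible signatures to $(19,3),(15,7),(11,11),(7,15),(3,19)$ --- definiteness alone cannot rule out, say, $(20,2)$ --- and finally hypothesis (1) together with Grunwald--Wang shows the form on the line $H_B(\mathbf{1}(-1))$ is $2d>0$, which eliminates $(19,3)$. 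Without this chain of arguments your appeal to Hasse--Minkowski has no real-place input and the isometry $H_B(M)\cong\Lambda\otimes\QQ$ is unsupported.

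Second, the descent. You correctly identify countability of the conjugate orbit as the crux, but you do not prove it, and the input you propose is vacuous: \emph{every} complex K3 surface has $H^2(\cdot,\QQ_\ell)$ isometric to $\Lambda\otimes\QQ_\ell$ as a quadratic space, so the observation that each conjugate $X_0^\sigma$ ``realizes the \'etale class of $M$'' carries no information ($\QQ_\ell$-cohomology of a complex variety retains no Galois or Hodge data), and Torelli plus hypothesis (3) gives you no handle on the orbit. The paper's route is different and is where the Hodge conjecture enters: full faithfulness of $H_B$ upgrades the Hodge isometry to an isomorphism of motives $M|_\CC\cong h^2(X)$ compatible with pairings; since $M$ is defined over $K$, conjugating gives $h^2(X^\sigma)\cong h^2(X)$, so every conjugate is \emph{isogenous} to $X$. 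Countability of the isogeny class is then not a Torelli statement but a theorem of Huybrechts (isogenous K3s are connected by chains of twisted Fourier--Mukai partners) combined with Ma's countability of twisted FM partners and countability of Brauer groups (lemma \ref{isogeny-class-corrected}); and ``countably many conjugates implies a model over $\ol{\QQ}$'' is proved by a field-theoretic spreading-out argument (lemma \ref{spreading out}), which also sidesteps the field-of-moduli versus field-of-definition issue your moduli-point argument would have to confront. Finally, to get $\rho_\ell|_{\Gamma_L}\cong H^2(X_{\ol{L}},\QQ_\ell)$ for all $\ell$ you need the isomorphism of motives itself to descend, via the rigidity statement $\Hom_{\cM{\ol{\QQ}}}(h^2(X_{\ol{\QQ}}), M|_{\ol{\QQ}})\cong\Hom_{\cM{\CC}}(h^2(X_\CC), M|_\CC)$ and a spreading-out to a finite extension $L/K$; Chebotarev and semisimplicity alone do not produce this, since a priori the descended surface is only known to have the right cohomology over $\CC$.
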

The primes $\ell_0, \ell_1, \ell_2$ could all be the same. Conditions $(1), (2)$ and $(4)$ are of course necessary conditions for the 
collection $\{ \rho_\ell \}$ to be isomorphic to $H^2(X_{\ol{K}}, \QQ_\ell)$ for $X$ a K3 surface with a polarization of degree $2d$. 
Condition $(3)$ is an irreducibility condition (similar to the hypothesis of absolute irreducibility in \cite[Thm 3.3]{PVZ}) and is satisfied by 
the cohomology of the generic K3 surface, i.e.\ those of geometric Picard rank 1 with $\mathrm{End}_{\Gamma_K}(H^2(X_{\ol{K}}, 
\QQ_\ell)) \cong \QQ_\ell \oplus \QQ_\ell$. Suppose $X$ is a K3 surface over $K \subset \CC$. Recall that for any complex K3 surface there 
is a decomposition of rational $\QQ$-Hodge structures
	\[ H^2(X_\CC, \QQ) = (\mathrm{NS}(X_\CC) \otimes \QQ) \oplus (T(X_\CC) \otimes \QQ) \]
of the rational $\QQ$-lattice $\mathrm{NS}(X_\CC) \otimes \QQ$ and its orthogonal complement $T(X_\CC) \otimes \QQ$. A theorem of 
Zarhin \cite[Thm 1.4.1]{zarhin1983hodge} shows that $T(X_\CC) \otimes \QQ$ is an irreducible $\QQ$-Hodge structure. The Mumford-Tate 
conjecture, which is known for K3 surfaces by Tankeev \cite{tankeev1991k3}, then shows that condition $(3)$ is satisfied by $H^2(X_{\ol{K}}, 
\QQ_\ell)$ when $T(X_\CC)\otimes\QQ$ is an absolutely irreducible $\QQ$-Hodge structure, and the $\Gamma_K$-action on 
	\[ \mathrm{NS}(X_{\ol{K}}) \otimes \QQ_{\ell_1} = \mathrm{NS}(X_\CC) \otimes \QQ_{\ell_1} \] 
is absolutely irreducible. 

Our final remark regards the last section of the paper. The main result of this section, lemma \ref{spreading out}, shows a rather general 
criterion for the rigid descent of a variety $X$ over an algebraically closed field $\Omega$ of characteristic zero to an algebraically closed 
subfield $k \subset \Omega$. 

%
\subsection{Questions}
In the recent preprint \cite{Baldi}, Baldi independently proves an analogue of theorem \ref{main} above for K3 surfaces whose Picard rank 
$\rho$ satisfies $12 \leq \rho < 20$. This is theorem 1.2 of \cite{Baldi} where he shows for representations 
	\[ \{ \rho_\ell \colon \Gamma_K \to \mathrm{Gl}_{22-\rho}(\QQ_\ell) \}_\ell \]
satisfying analogues of conditions $(2), (3)$ and $(4)$ above, there is a finite extension $L$ of $K$ such that $\rho_\ell |_{\Gamma_{L}}$ is 
isomorphic to $T(X_{\ol{L}})_{\QQ_\ell}$. The main question given our result and that of Baldi is whether we can take $L = K$ for the field of 
definition of the K3 surface $X$, or at least if the degree of the finite extension can be bounded.

In both Baldi's work and this paper, the arguments of \cite{PVZ} are immediately extended to get a $\QQ$-Hodge structure $V$ of K3 type 
from the collection of $\ell$-adic representations. The next key step, which is unique to the K3 case, is to produce a lattice inside this $\QQ
$-Hodge structure and to show that this lattice is isomorphic to a (sub-)polarized $\ZZ$-Hodge structure of $H^2(X, \ZZ)$ for some complex 
K3 surface $X$. In Baldi's paper, this is done by picking any lattice $T\subset V$ and then using an embedding theorem of Nikulin to get an 
embedding of $T$ into the K3 lattice $\Lambda = U^{\oplus3}\oplus E_8(-1)^{\oplus 2}$. Then one shows that the Hodge structure induced 
by $T$ on $\Lambda$ is of K3 type and hence, by surjectivity of the period map, isomorphic to $H^2(X,\ZZ)$ for a K3 surface $X$ over 
$\CC$. Baldi's requirement that $12 \leq \rho < 20$ is a consequence of the fact that this is the range for which the embedding theorem 
holds. For our proof, we show that $V \cong \Lambda \otimes_\ZZ \QQ$ as quadratic spaces and thus produce a Hodge structure on the K3 
lattice $\Lambda$ from which we can apply surjectivity of the period map. In light of our theorem and Baldi's theorem it is natural to ask 
whether such a theorem holds for `Picard rank' $2 \leq \rho \leq 11$, i.e.\ compatible system of representations $\{ \rho_\ell \colon 
\Gamma_K \to \mathrm{O}(\Lambda \otimes \QQ_\ell) \}$ which decompose as $\rho_\ell \cong V_\ell \oplus \QQ_\ell(-1)^{\oplus{\rho}}$ 
with $V_\ell$ irreducible (and satisfying conditions (2) and (4) of theorem \ref{main}). The author does not believe that the proofs of this 
paper or those of Baldi can be adapted to prove an analogous theorem for $2 \leq \rho \leq 11$. 

Our original motivation was to apply theorem \ref{main} to answer a question about the section conjecture for moduli spaces of K3 surfaces, 
as was done in \cite[Thm 1.1]{PVZ} for abelian varieties. The moduli space we are interested in is the space $\mc{F}_{2d}$, using the 
notation of \cite{Rizov}, classifying primitively polarized K3 surfaces of degree 2d. Using the following diagram
\begin{equation}
\begin{tikzcd}
1 \arrow[r] & \pi_1(\ol{\mc{F}_{2d}})  \arrow[r] \arrow{rd}{\alpha} & \pi_1(\mc{F}_{2d}) \arrow[r]  \arrow[d] & \Gamma_K \arrow[r]  & 1 \\
& & \mathrm{O}({\Lambda_{2d}}\otimes\hat{\ZZ})  & & 
\end{tikzcd}
\end{equation}
we can associate to each section $s \in H(\mc{F}_{2d}, K)$ an $O( \Lambda_{2d} \otimes \hat{\ZZ})$ representation. If we knew that this 
map was a bijection then an analogue of \cite[Thm 1.1]{PVZ} could be proven. However, a computation of the group 
$\pi_1(\ol{\mc{F}_{2d}})$ seems difficult. One approach might be to compute the topological fundamental group of the complex analytic 
space $\mc{F}_{2d}^\text{an}$ and then compare the profinite completion to a suitable orthogonal group. The domain 
$\mc{F}_{2d}^\text{an}$ has an explicit description as the quotient by an orthogonal group of the complement of an infinite union of 
hyperplane sections in a period domain $D_d$, see \cite[Remark 6.3.7]{Huybrechts}. Given this explicit description, one may be able to 
compute the topological fundamental group. This is expected to be very large, containing an infinitely generated free group generated by 
loops around the hyperplane sections. These are the sorts of groups that are not residually finite, and the kernel to the profinite completion 
can be very large, see \cite{Toledo}. So while $\pi_1^\text{top}(\mc{F}_{2d}^\text{an})$ is very far from any orthogonal group, it may happen 
that the `non-orthogonal' part gets killed in the profinite completion. 

Finally, the last question that naturally follows from this is paper is whether there are analogues of theorem \ref{main} for hyperkahler 
varieties. There are known results on surjectivity of the period map and global Torelli theorems are known for hyperkahler manifolds, see 
\cite{Huybrecht-hyperkahler}, so it may be reasonable that methods in this paper could work for such varieties. 

%
\subsection{Terminology}
Throughout the paper, $K$ will be a number field with a fixed algebraic closure $\ol{K}$. We write $\Gamma_K = \mathrm{Gal}(\ol{K}/K)$ 
for the absolute Galois group. 

If $K$ is a field, and $F$ a field of characteristic $0$, we denote by $\cM{K,F}$ the category of pure numerical motives over $K$ with 
coefficients in $F$. If $F = \QQ$ we simply write $\cM{K}$. The functors $H_\ell, H_B, H_{dR}$ are the $\ell$-adic, Betti and algebraic 
de Rham realization functors. Implicitly whenever we write any of these functors, we are assuming the conjecture that numerical 
equivalence is equal to homological equivalence for that cohomology theory, and in this way the realization functors may be defined 
on numerical motives.

See section \ref{Quadratic forms and lattices} for notation about quadratic forms and lattices. If $V$ and $W$ are either both 
$\ZZ$-Hodge structures equipped or $\QQ$-Hodge structures with pairings (e.g. polarizations) then a map $V \to W$ is called a Hodge 
isometry if it is an isomorphism of Hodge structures that also respects the pairings. All K3 surfaces are smooth and projective. We write 
$\QQ\text{-HS}$ for the category of $\QQ$-Hodge structures. 

%
\subsection{Acknowledgements}
It is a pleasure to thank Stefan Patrikis, for suggested this problem to me, for his patient guidance and for the many helpful 
discussions we had. I would also like to thank Domingo Toledo for some helpful discussions. 

\section{Quadratic forms and lattices}\label{Quadratic forms and lattices}
%
\subsection{Notation}
If $F$ is a field of characteristic zero, a quadratic space over $F$ consists of a vector space $V$ with a non degenerate symmetric bilinear 
pairing $V \otimes V \to F$ (or equivalently a quadratic form on $V$). Of interest to us are quadratic spaces over 
$\QQ, \QQ_p$ and $\mathbb{R}$. When we talk about a lattice $T$, we mean a finitely generated free abelian group $T$ with a pairing 
$(\cdot, \cdot) \colon T\times T \to \ZZ$ that is non degenerate. 

%
\subsection{Lattices associated to K3 surfaces}
We write $U$ to denote the hyperbolic plane and $E_8$ the lattice associated to the Dynkin diagram $E_8$.

\begin{exmpl}\label{K3 lattice}
Let $\Lambda$ be the lattice $U^{\oplus 3} \oplus E_8(-1)^{\oplus 2}$. The disciminant $d(\Lambda) = -1$ so $\Lambda$ is even, 
unimodular and has signature $(3,19)$. This is called the K3 lattice, because if $X$ is a K3 surface over $\CC$, then the singular 
cohomology $H^2(X, \ZZ)$ with the cup product pairing is isomorphic to $\Lambda$. See \cite{Huybrechts}. 
\end{exmpl}

We will need the following lemma for the proof of the main theorem. 
\begin{lem}\label{k3 lattice}
Let $\Lambda$ be the K3 lattice. Then there exists a $\QQ$-quadratic space $V$ of signature $(r,s)$ such that 
$V \otimes_\QQ \QQ_p \cong \Lambda\otimes_\ZZ \QQ_p$ for all finite primes $p$ if and only if $(r,s)$ is one of the following pairs: 
	\[ (19,3), (15,7), (11,11), (7, 15), (3,19) \]
\end{lem}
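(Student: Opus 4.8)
The plan is to prove this by reducing the existence of $V$ to the Hasse--Minkowski classification of quadratic spaces over $\QQ$. Recall that a nondegenerate quadratic space over $\QQ$ is determined up to isometry by its local completions, and over each completion $\QQ_v$ the isometry class is pinned down by the rank together with the discriminant (in $\QQ_v^\times/(\QQ_v^\times)^2$) and the Hasse--Witt invariant $\epsilon_v = \prod_{i<j}(a_i,a_j)_v$, with the real place additionally recording the signature. First I would note that any such $V$ has $\dim V = 22$, since the rank is preserved under $\otimes_\QQ\QQ_p$. Writing $\Lambda_p = \Lambda\otimes_\ZZ\QQ_p$, the hypothesis $V\otimes_\QQ\QQ_p \cong \Lambda_p$ fixes the discriminant and Hasse invariant of $V$ at every finite place, so the only free datum is the signature $(r,s)$ with $r+s=22$, and the question is exactly which signatures are compatible with these prescribed finite-place invariants.

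Next I would extract the two necessary conditions. For the discriminant: by Example~\ref{K3 lattice} we have $d(\Lambda)=-1$, so the global discriminant of $V$ must equal $-1\in\QQ^\times/(\QQ^\times)^2$, whose image at the real place is negative; since a form of signature $(r,s)$ has real discriminant $(-1)^s$, this forces $s$ to be odd. For the Hasse invariant I would invoke Hilbert reciprocity, $\prod_v \epsilon_v = 1$. The clean device here is to compare against $\Lambda\otimes_\ZZ\QQ$ itself, which is a genuine global form of signature $(3,19)$: its product formula gives $\prod_{p<\infty}\epsilon_p(\Lambda) = \epsilon_\infty(3,19)$, so that no explicit $p$-adic Hasse computation is ever needed. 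The product formula for $V$ then collapses to the single real-place identity $\epsilon_\infty(r,s) = \epsilon_\infty(3,19)$. Using $\epsilon_\infty(r,s) = (-1)^{s(s-1)/2}$ and $\epsilon_\infty(3,19) = (-1)^{\binom{19}{2}} = (-1)^{171} = -1$, this says $s(s-1)/2$ is odd; for $s$ odd this is equivalent to $s\equiv 3 \pmod 4$. Together with $0\le s\le 22$ this selects $s\in\{3,7,11,15,19\}$, i.e.\ precisely the five listed pairs $(r,s)$.

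For the converse (sufficiency) I would apply the realization theorem for quadratic forms over $\QQ$ (Serre, \emph{A Course in Arithmetic}, Ch.~IV): a family of local discriminants and Hasse invariants, together with a signature at $\infty$, is realized by a global quadratic space iff the discriminant lifts to a single class in $\QQ^\times/(\QQ^\times)^2$, the Hasse invariants are almost all trivial and satisfy the product formula, and each prescribed local form exists. Prescribing at each finite $p$ the invariants of $\Lambda_p$ and at $\infty$ the signature $(r,s)$ with $s\equiv 3\pmod 4$, all of these hypotheses hold by the computations above; since the rank is $22\ge 3$ the only exceptional small-rank constraints are vacuous and every local form trivially exists. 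This produces a global $V$ of signature $(r,s)$, and because two forms over $\QQ_p$ of equal rank, discriminant, and Hasse invariant are isometric, we recover $V\otimes_\QQ\QQ_p\cong\Lambda_p$ at every finite place.

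The main obstacle is not conceptual but bookkeeping: one must fix a single convention for the Hasse--Witt invariant and check that the realization theorem's hypotheses are met verbatim (in particular that the global discriminant $-1$ genuinely induces the local discriminants of $\Lambda_p$, and that no degenerate small-rank condition relating $d_v$ and $\epsilon_v$ intervenes). The trick of routing everything through the product formula for $\Lambda\otimes_\ZZ\QQ$ itself is what keeps the argument short, since it replaces all infinitely many local Hasse-symbol computations by the single evaluation $\epsilon_\infty(3,19)=-1$.
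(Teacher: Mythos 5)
Your proof is correct and takes essentially the same route as the paper: both fix the finite-place invariants from $\Lambda$, apply Hilbert reciprocity to $\Lambda\otimes_\ZZ\QQ$ itself to force $(-1)^{s(s-1)/2}=-1$, use the discriminant $d(\Lambda)=-1$ (your implicit step that local squares at all finite places give a global square is exactly the paper's Grunwald--Wang invocation) to force $s$ odd, and then cite the existence theorem for quadratic forms with prescribed local invariants to get sufficiency. The only difference is bibliographic: you route the argument through Serre's \emph{A Course in Arithmetic} where the paper uses Cassels.
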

\begin{proof}
By \cite[Thm 1.3, pg 77]{Cassels} and the fact that we know 
that the local data, we compute
\[\prod_{p\neq \infty} c_p(V) = \prod_{p\neq \infty} c_p(\Lambda) = c_\infty(\Lambda) = -1\]
(here we use theorem 1.3, page 77; theorem 1.2, page 56 loc.\ cit.). Again those same two theorems and the fact that we know the local 
data imply that $(-1)^{s(s-1)/2} = -1$, so $s \equiv 2,3 \mod 4$. By the Grunwald-Wang theorem, we can assume that $d(V) = d(\Lambda)$. 
Therefore by \cite[Thm 1.2, pg 56]{Cassels} we have $(-1)^s = -1$. We conclude from theorem 1.3 page 77 loc.\ cit.\ that the required $V$ 
will exist if and only if $s \equiv 3 \mod 4$. Therefore the possible signatures are $(19,3), (15,7), (11,11), (7, 15), (3,19)$. 
\end{proof}

\section{K3 surfaces}
\subsection{Facts about K3 surfaces}
For convenience of the reader, we recall facts about K3 surfaces that we will use. All proofs may be found in the book 
\cite{Huybrechts} whose terminology we use. 

Let $V$ be a finite free $\ZZ$ or $\QQ$-module. A Hodge structure of K3 type on $V$ is a weight 2 Hodge structure such that $V^{2,0}, 
V^{0,2}$ are $1$-dimensional and $V^{i,j} = 0$ for $|i-j| > 2$. Let $\Lambda$ be the K3 lattice (see example \ref{K3 lattice}). The period 
domain $D$ is defined as 
	\[ D := \{ x \in \mathbb{P}(\Lambda_\CC) \colon (x, \ol{x}) > 0, \quad (x)^2 = 0 \} \]
Given an element $x \in D$ we get a unique polarizable Hodge structure of K3 type on $\Lambda$ satisfying $\Lambda^{2,0} = x$ and 
$x \perp \Lambda^{1,1}$. The key fact that we use about K3 surfaces is the surjectivity of the period map, which we recall here. 

\begin{thm}[Surjectivity of the period map, {\cite[Ch.\ 6, Rmk 3.3]{Huybrechts}}]\label{surjectivity of period map}
For any $x \in D$ there exists a K3 surface $X$ and a Hodge isometry 
$\varphi \colon H^2(X, \ZZ) \xrightarrow{\sim} \Lambda$ such that $\varphi^{-1}(x)$ spans $H^{2,0}(X)$.  
\end{thm}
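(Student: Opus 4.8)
The statement is precisely the surjectivity of the period map for marked K3 surfaces, a classical (and deep) theorem; the plan is to sketch the standard proof, while noting that in the body of the paper this is legitimately invoked as a cited result. First I would set up the moduli space $N$ of marked K3 surfaces, whose points are pairs $(X, \varphi)$ with $X$ a complex K3 surface and $\varphi \colon H^2(X,\ZZ) \xrightarrow{\sim} \Lambda$ a Hodge isometry (a marking), together with the period map $\mathcal{P} \colon N \to D$ sending $(X,\varphi)$ to the line $\varphi_\CC(H^{2,0}(X)) \in \mathbb{P}(\Lambda_\CC)$, which lands in $D$ by the Riemann bilinear relations. The theorem is then equivalent to surjectivity of $\mathcal{P}$: given $x \in D$ and any $(X,\varphi) \in \mathcal{P}^{-1}(x)$, the marking $\varphi$ is the desired Hodge isometry and $\varphi^{-1}(x)$ spans $H^{2,0}(X)$ by construction.

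The first main step is the local Torelli theorem, namely that $\mathcal{P}$ is a local isomorphism. For this I would check that deformations of a K3 surface are unobstructed of dimension $20$ --- using the holomorphic symplectic form to identify $T_X \cong \Omega^1_X$, so that $H^0(X,T_X) = H^2(X,T_X) = 0$ and $\dim H^1(X,T_X) = \dim H^1(X,\Omega^1_X) = 20$ --- and that the differential of $\mathcal{P}$ is identified via Kodaira-Spencer with the natural isomorphism $H^1(X,T_X) \xrightarrow{\sim} \Hom(H^{2,0}(X), H^{1,1}(X))$, i.e.\ with the tangent space of $D$ at $x$. In particular $\mathcal{P}$ is open, so its image is an open subset of $D$.

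Openness alone is insufficient, and the remaining work is to produce a dense set of periods and then fill in the rest. Here I would use Kummer surfaces: for a complex $2$-torus $A$ the minimal resolution of $A/\pm 1$ is a K3 surface whose transcendental Hodge structure is that of $H^2(A)$, and since complex tori realize a dense set of such Hodge structures, the resulting Kummer periods are dense in $D$. The genuinely hard step --- and the one I expect to be the main obstacle --- is upgrading ``open image containing a dense set'' to full surjectivity. The classical route is the twistor-line argument: given a period coming from an honest K3 surface $X$, one chooses a Kähler class and invokes Yau's solution of the Calabi conjecture to obtain a Ricci-flat (hence hyperkähler) metric; the associated twistor family sweeps out a rational curve (a twistor line) through $x$ in $D$, all of whose points are again K3 periods, and one shows that the twistor lines emanating from the dense image cover all of $D$. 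This analytic input (equivalently, Kulikov's semistable-degeneration analysis of the boundary) is the deep geometric heart of the result, which is why I would ultimately appeal to the cited theorem of Huybrechts rather than reprove it in detail.
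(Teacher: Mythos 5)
Your proposal is correct and consistent with the paper's treatment: the paper does not prove this statement at all but invokes it as a known classical theorem with a citation to Huybrechts, which is exactly what you ultimately do. Your sketch (local Torelli via unobstructed deformations with $h^0(T_X)=h^2(T_X)=0$ and $h^1(T_X)=20$, density of Kummer periods, and the twistor-line argument resting on Yau's theorem) is an accurate outline of the standard proof given in that cited reference.
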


Following the definition of \cite{Huybrechts-isogeny}, two complex K3 surfaces $X$ and $X'$ are isogenous if there exists a $\QQ$-Hodge
isometry $H^2(X, \QQ) \xrightarrow{\sim} H^2(X', \QQ)$. We record the following lemma to be used later. 

\begin{lem}\label{isogeny-class-corrected}
Let $X$ be a complex K3 surface, and let $\mc{S}(X)$ be the set of isomorphism classes of complex K3 surfaces $Y$ that are isogenous to 
$X$. Then $\mc{S}(X)$ is countable. 
\end{lem}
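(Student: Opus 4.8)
The plan is to fix the $\QQ$-Hodge structure $H := H^2(X,\QQ)$ together with its cup product pairing, and to reduce the statement to a count of lattices inside this one fixed finite-dimensional $\QQ$-vector space. Given a K3 surface $Y$ isogenous to $X$, an isogeny is by definition a $\QQ$-Hodge isometry $\phi_Y \colon H^2(Y,\QQ) \xrightarrow{\sim} H$; I would push the integral cohomology forward and set $T_Y := \phi_Y(H^2(Y,\ZZ)) \subset H$. Since $\phi_Y$ is a $\QQ$-linear isomorphism and $H^2(Y,\ZZ)\otimes_\ZZ\QQ = H^2(Y,\QQ)$, the image $T_Y$ is a full-rank lattice in $H$, i.e. $T_Y\otimes_\ZZ\QQ = H$. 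The idea is that $Y$ is recorded, up to a harmless ambiguity, by the lattice $T_Y$, and that there are only countably many possibilities for $T_Y$.

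The key observation is that the set $\mathcal{L}$ of all full-rank lattices $T\subset H$ (those with $T\otimes_\ZZ\QQ = H$) is countable. Indeed, fixing one such lattice $L_0$, any other $T\in\mathcal{L}$ is obtained from a basis of $L_0$ by a change of basis with rational entries, so $T = g\cdot L_0$ for some $g \in \GL{}(H)$, and $T$ determines $g$ up to right multiplication by $\mathrm{Aut}(L_0)$. Hence $\mathcal{L}$ is a quotient of $\GL{}(H)\cong\GL{\dim H}(\QQ)$, which is countable, so $\mathcal{L}$ is countable. In particular each $T_Y$ lies in this one fixed countable set $\mathcal{L}$, regardless of the choice of $\phi_Y$.

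To finish I would invoke the global Torelli theorem for K3 surfaces \cite{Huybrechts}: two K3 surfaces are isomorphic if and only if their integral second cohomologies are Hodge isometric. Let $G$ denote the group of $\QQ$-linear Hodge isometries of $H$ (automorphisms of the $\QQ$-Hodge structure $H$ preserving the pairing); it acts on $\mathcal{L}$, and I would send $[Y]\in\mc{S}(X)$ to the $G$-orbit of $T_Y$. Changing the choice of $\phi_Y$ to $\tilde\phi_Y$ alters $T_Y$ by the element $\tilde\phi_Y\circ\phi_Y^{-1}\in G$, so the orbit is well defined; and if $T_Y$ and $T_{Y'}$ lie in the same $G$-orbit via some $g\in G$, then $\phi_{Y'}^{-1}\circ g\circ\phi_Y$ is a $\QQ$-Hodge isometry carrying $H^2(Y,\ZZ)$ onto $H^2(Y',\ZZ)$, whence $Y\cong Y'$ by Torelli. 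Thus $\mc{S}(X)$ injects into $\mathcal{L}/G$, a quotient of the countable set $\mathcal{L}$, and is therefore countable.

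I expect the main point requiring care to be the bookkeeping in this last step: ensuring that $[Y]\mapsto G\cdot T_Y$ is genuinely well defined (independent of $\phi_Y$) and injective, which is precisely where the global Torelli theorem enters. The countability of $\mathcal{L}$ itself is the conceptual heart but is elementary once one notices that passing to a fixed $\QQ$-vector space confines all the integral cohomology lattices to a single commensurability class governed by $\GL{\dim H}(\QQ)$.
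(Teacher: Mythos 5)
Your proof is correct, and it takes a genuinely different route from the paper's. The paper deduces countability from derived-category machinery: Huybrechts' theorem that any isogeny of complex K3 surfaces is realized by a chain of at most $22$ twisted Fourier--Mukai equivalences, the countability of the Brauer groups $\mathrm{Br}(S_i) \cong (\QQ/\ZZ)^{\oplus 22 - \rho(S_i)}$ of the intermediate surfaces, and Ma's theorem that each twisted Fourier--Mukai partner set is countable; countably many choices at each of boundedly many steps then give the result. You instead fix $H = H^2(X,\QQ)$ once and for all, record each isogenous $Y$ by the full-rank lattice $T_Y = \phi_Y(H^2(Y,\ZZ)) \subset H$, observe that the set of all full-rank lattices in $H$ is countable because it is a quotient of $\GL{22}(\QQ)$, and use the global Torelli theorem --- in its weak form, that the existence of a Hodge isometry $H^2(Y,\ZZ) \cong H^2(Y',\ZZ)$ forces $Y \cong Y'$ --- to make $[Y] \mapsto G \cdot T_Y$ injective modulo the group $G$ of rational Hodge isometries of $H$. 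Your bookkeeping is sound; the one point left implicit, that isomorphic surfaces give the same $G$-orbit, is immediate by taking $\phi_{Y'} = \phi_Y \circ f^*$ for an isomorphism $f \colon Y \to Y'$. As for what each approach buys: yours is considerably more elementary and self-contained, trading the isogeny-chain theorem, Ma's countability of twisted Fourier--Mukai partners, and the Brauer-group computation for a single classical input (the Torelli theorem, which is in the same Huybrechts reference the paper already uses), and it generalizes verbatim to any class of varieties satisfying a global Torelli theorem of this Hodge-isometry type; the paper's argument is heavier, but it stays close to the structural description of K3 isogenies via twisted Fourier--Mukai partners, which is the form in which the isogeny theorem it cites is actually stated.
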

\begin{proof}
The lemma will follow from a theorem of Huybrechts along with the countability of the Brauer group of a surface and finiteness results on 
Fourier-Mukai partners of twisted K3 surfaces. Recall that a twisted K3 surface $(S, \alpha)$ consists of a K3 surface $S$ and an element 
$\alpha$ of the Brauer group of $S$. The set of twisted Fourier-Mukai partners of $(S, \alpha)$ is 
	\[ \mathrm{FM}(S, \alpha) = \{(S', \alpha') \mid \text{ there is an equivalence } D^b(S, \alpha) \cong D^b(S', \alpha') \}/\cong \]
where $D^b(S, \alpha)$ is the derived category of $\alpha$-twisted coherent sheaves on $S$. All that is important for us is that for any K3 
surface $S$ and any $\alpha \in \mathrm{Br}(X)$ the set $\mathrm{FM}(S, \alpha)$ is countable, see \cite[Prop 4.3]{ma2010twisted} 

Let $Y \in \mc{S}(X)$. Then by \cite{Huybrechts-isogeny}, we can find Brauer classes $\alpha \in \mathrm{Br}(X)$ and $\beta \in 
\mathrm{Br}(Y)$, complex K3 surfaces $S_1, \ldots, S_n$ and Brauer classes $\alpha_i, \beta_i \in \mathrm{Br}(S_i)$ for $i = 1, \ldots, n$  
such that there is a chain of equivalences
\[ (X, \alpha) \sim_\text{FM} (S_1, \alpha_1), (S_1, \beta_1) \sim_\text{FM} (S_2, \alpha_2), \ldots, (S_{n-1}, \beta_{n-1}) \sim_\text{FM} 
(S_n, \alpha_n), (S_n, \beta_n) \sim_\text{FM} (Y, \beta) \]
Where we use $\sim_\text{FM}$ to denote the relation of being a twisted Fourier-Mukai partner. Further, we may assume $n \leq 22$. Recall 
that $\mathrm{Br}(S_i) = H^2(S_i, \mc{O}_{S_i}^\times)_\text{tors}$. From the exponential sequence, we get an exact sequence
	\[ 0 \to H^1(X, \mc{O}_{S_i}^\times) \to H^2(S_i, \ZZ) \to H^2(S_i, \mc{O}_{S_i}) \to H^2(S_i, \mc{O}_{S_i}^\times) \to 0 \]
and hence $H^2(S_i, \mc{O}_{S_i})$ is isomorphic as groups to $\CC/\ZZ^{\oplus 22 - \rho(S_i)}$. It follows that $\mathrm{Br}(S_i)$ is 
isomorphic to $(\QQ/\ZZ)^{\oplus 22-\rho(S_i)}$ which is countable. 
Hence, given $X$ there are only countably many options for $\alpha$ and by countability of $\mathrm{FM}(X, \alpha)$ only countably many 
options for $(S_1, \alpha_1)$. Likewise, there are only countably many options for $\beta_1$ and hence only countably many options for 
$(S_2, \alpha_2)$. Proceeding in this fashion we conclude there are only countably many options for $Y$. 
\end{proof}

\section{Motivic setup}
We recall basics facts about motives and refer the reader to \cite{Andre} for details. For a field $K$, write $\mc{P}_K$ for the category of 
smooth projective varieties over $K$. If $F$ is a field of characteristic zero we write $\cM{K,F}$ for the category of pure homological motives 
over $K$ with coefficients in $F$. There is a functor $h \colon \mc{P}_K \to \cM{K,F}$ that functions as a universal cohomology theory, 
meaning that if $H^\ast \colon \mc{P}_K \to F$-Alg is a Weil cohomology theory, then $H^\ast$ extends uniquely 
through $h$ to a functor $\cM{K,F} \to F$-Alg. Under the conjecture that numerical equivalence is the same as homological equivalence 
then the category $\cM{K,F}$ is a semisimple rigid abelian tensor category by \cite{Jannsen}. A choice of a Weil cohomology theory 
$H^\ast$ that extends to $\cM{K,F}$ is a fiber functor, making $\cM{K,F}$ a neutral Tannakian category. 
Thus by general theory, we have an equivalence between $\cM{K,F}$ and $\mathrm{Rep}_{\mGal{K,F}}(F)$, the category of $F$ 
representations of the pro-reductive algebraic group $\mGal{K,F} = \mathrm{Aut}^{\otimes} H^\ast$. We recall the most basic examples of 
fiber functors, and the extra structures they carry. 

\begin{exmpl}
Let $K$ be any field, $K^\text{sep}$ be a separable closure and $\ell$ a prime. For $X$ smooth projective over $K$, the $\ell$-adic 
cohomology $H_\ell(X) = H_{\text{et}}^\ast(X_{K^\text{sep}}, \QQ_\ell)$ is a Weil cohomology theory on $\mc{P}_K$. Further, $H_\ell(X)$ 
has a natural $\Gamma_K = \mathrm{Gal}(K^\text{sep}/K)$-action, and we write 
$H_\ell \colon \cM{K, \QQ_\ell} \to \mathrm{Rep}_{\QQ_\ell}(\Gamma_K)$ for the enriched $\ell$-adic realization functor. 
The Tate conjecture asserts that $H_\ell$ is fully faithful when $K$ is a number field. 
\end{exmpl}

\begin{exmpl}
Let $K = \CC$. For a smooth projective variety $X$ over $\CC$ we can form the corresponding complex-analytic manifold $X^\text{an}$. 
Singular cohomology $H_B(X) = H^\ast_\text{sing}(X^\text{an}, \QQ)$ is a Weil cohomology theory on $\mc{P}_\CC$. Further, $H_B(X)$ 
has a $\QQ$-Hodge structure, and we write $H_B \colon \cM{\CC} \to \QQ$-HS for the enriched Betti realization functor. The Hodge 
conjecture asserts that $H_B$ is fully faithful. 
\end{exmpl}

\begin{exmpl}
Let $K$ be a field of characteristic $0$. For a smooth projective variety $X$ over $K$ we have the algebraic de Rham complex 
$\Omega_{X/K}^\bullet$. Algebraic de Rham cohomology $H_{dR}(X) = \mathbb{H}^\ast(X,\Omega_{X/K}^\bullet)$ is a Weil cohomology 
theory on $\mc{P}_K$ (with coefficients in $K$). Further, $H_{dR}(X)$ has a filtration, and we write 
$H_{dR} \colon \cM{K,K} \to \mathrm{Fil}_K$ (with $\mathrm{Fil}_K$ the category of filtered $K$-vector spaces) for the corresponding 
enriched de Rham realization functor. 
\end{exmpl}

For a given embedding $\iota \colon \overline{\QQ} \to \overline{\QQ_\ell}$ let $H_\iota$ be the composition  
$\cM{K, \overline{\QQ}} \to \cM{K, \overline{\QQ_\ell}} \to \mathrm{Rep}_{\overline{\QQ_\ell}}(\Gamma_K)$. The following lemma is taken 
from \cite{PVZ}, with a slight weakening of the hypothesis due to \cite{Moonen}, in which it is shown that the Tate conjecture implies the 
Grothendieck-Serre semisimplicity conjecture. 

\begin{lem}[Lemma 3.3, \cite{PVZ}]\label{3.3pvz}
Assume the Tate and Fontaine-Mazur conjectures, and let $K$ be a number field. If $r_\ell \colon \Gamma_K \to \GL{N}(\QQ_\ell)$ is an 
irreducible geometric Galois representation. Then there exists an object $M$ of $\cM{K, \overline{\QQ}}$ such that 
$r_\ell \otimes_{\QQ_\ell} \overline{\QQ_\ell} \cong H_\iota(M)$.
\end{lem}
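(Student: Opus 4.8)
The plan is to run the two-step argument that is by now standard for producing motives from geometric Galois representations: first invoke the Fontaine-Mazur conjecture to realize $r_\ell$ inside the cohomology of a variety, and then use the Tate conjecture (together with the semisimplicity of the category of motives) to split off the desired piece at the level of motives. The passage to $\ol{\QQ}$-coefficients in the conclusion is forced precisely because the constituent we isolate is only pinned down after the endomorphism algebras of the simple motives involved have been split over an algebraically closed coefficient field.

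First I would apply the Fontaine-Mazur conjecture. Since $r_\ell$ is irreducible and geometric, it is, up to a Tate twist, isomorphic to a subquotient of $H^i_{\text{et}}(X_{\ol{K}}, \QQ_\ell)$ for some smooth projective variety $X$ over $K$ and some $i$; irreducibility together with weight considerations confines it to a single cohomological degree. Setting $N = h^i(X)(m) \in \cM{K}$ for the appropriate twist $m$, this says exactly that $r_\ell$ is a subquotient of the $\ell$-adic realization $H_\ell(N)$ as a $\QQ_\ell[\Gamma_K]$-module.

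Next comes the Tate step. Under the Tate conjecture the realization functor is fully faithful in the strong sense that $\Hom_{\cM{K}}(A,B) \otimes_\QQ \QQ_\ell \xrightarrow{\sim} \Hom_{\Gamma_K}(H_\ell A, H_\ell B)$; by Jannsen, and by Moonen's result that Tate implies Grothendieck-Serre semisimplicity, the category $\cM{K}$ (and hence $\cM{K,\ol{\QQ}}$) is semisimple and idempotent-complete. Consequently $H_\ell(N)$ is a semisimple $\Gamma_K$-representation, so $r_\ell$ is in fact a direct summand, and $r_\ell \otimes_{\QQ_\ell} \ol{\QQ_\ell}$ is a direct summand of $H_\iota(N_{\ol{\QQ}})$, where $N_{\ol{\QQ}} = N \otimes_\QQ \ol{\QQ}$. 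I would then lift the $\Gamma_K$-equivariant projection $\pi$ onto this summand to a motivic idempotent: the full faithfulness above, in its $\ol{\QQ}$-coefficient form, identifies $\mathrm{End}_{\cM{K,\ol{\QQ}}}(N_{\ol{\QQ}}) \otimes_{\ol{\QQ}} \ol{\QQ_\ell}$ with $\mathrm{End}_{\Gamma_K}(H_\iota(N_{\ol{\QQ}}))$, and since over the algebraically closed field $\ol{\QQ}$ this endomorphism algebra is a product of matrix algebras, $\pi$ is conjugate to the realization of an idempotent $e \in \mathrm{End}_{\cM{K,\ol{\QQ}}}(N_{\ol{\QQ}})$. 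Taking $M = e N_{\ol{\QQ}}$, which exists by idempotent-completeness, gives $H_\iota(M) \cong \mathrm{im}(\pi) \cong r_\ell \otimes_{\QQ_\ell} \ol{\QQ_\ell}$.

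The main obstacle is this last lifting step, namely the coefficient bookkeeping that matches the Galois-equivariant idempotent with a genuine motivic one. This is where both hypotheses are essential and intertwine: semisimplicity guarantees that the projection exists and that subquotients are summands, while full faithfulness of the realization guarantees the projection is motivic. It is also the reason the statement cannot be made over $\QQ$-coefficients in general, since a simple motive over $\QQ$ may have a nontrivial, possibly noncommutative, endomorphism algebra, so its $\ell$-adic realization need not be irreducible; only after extending scalars to $\ol{\QQ}$ and $\ol{\QQ_\ell}$ do the constituents separate cleanly enough to isolate $r_\ell \otimes_{\QQ_\ell} \ol{\QQ_\ell}$.
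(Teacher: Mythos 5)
Your argument is correct and reconstructs essentially the standard proof of this lemma, which the paper itself does not reprove but imports directly from \cite{PVZ} (with the hypothesis weakened via \cite{Moonen}, exactly the observation you use that the Tate conjecture supplies Grothendieck--Serre semisimplicity). The chain Fontaine--Mazur $\Rightarrow$ realization inside $h^i(X)(m)$, semisimplicity $\Rightarrow$ direct summand, and Tate full faithfulness over $\ol{\QQ}$-coefficients $\Rightarrow$ lifting the Galois-equivariant projector to a motivic idempotent is precisely the argument of \cite[Lemma 3.3]{PVZ}, with your idempotent-conjugation step a cosmetic variant of their matching of simple motivic summands with irreducible constituents.
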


\section{Proof of main theorem}
The first part of the proof follows closely that of \cite[Thm 3.1]{PVZ}, the main difference being that we have to worry about carrying the 
bilinear form through the motivic yoga. 
\begin{proof}[Proof of theorem \ref{main}] 
Let $\{ \rho_\ell \colon \Gamma_K \to \mathrm{O}(\Lambda \otimes \QQ_\ell) \}$ be as in the theorem. Fix an embedding 
$\iota_0 \colon \overline{\QQ} \hookrightarrow \overline{\QQ_{\ell_0}}$. Then from lemma \ref{3.3pvz} (which extends to 
semisimple geometric representations) we have a motive $M$ in $\cM{K,\overline{\QQ}}$ such that 
$H_{\iota_0}(M) \cong \rho_{\ell_0}\otimes \overline{\QQ_{\ell_0}}$. In fact, $M$ has coefficients in some finite extension 
$E$ of $\QQ$ inside $\ol{\QQ}$. Let $\rho \colon \mGal{K,E} \to \GL{22, E}$ be the associated motivic Galois representation. Now 
$\iota_0$ induces some place $\lambda_0$ of $E$. If $\lambda$ is a finite place of $E$ (say $\lambda \mid \ell$) let $\rho_\lambda$ be 
the $\lambda$-adic realization of $\rho$. Then as in \cite{PVZ}, for almost all places $v$ of $K$ there is an equality of the rational numbers
	\[ \tr(\rho_\lambda(\frob{v})) = \tr(\rho_{\lambda_0}(\frob{v})) = \tr(\rho_{\ell_0}(\frob{v})) = \tr(\rho_{\ell}(\frob{v})). \]
By Brauer-Nesbitt, a continuous semisimple Galois representations $\rho$ of $\Gamma_K$ is determined by $\tr(\rho(\sigma))$ for 
$\sigma$ in a dense subset of $\Gamma_K$. By Chebotarev we may take the collection $\frob{v}$ for $v$ as above and conclude that
	\[ \rho_\lambda \cong \rho_\ell \otimes_{\QQ_\ell} E_\lambda\] 
for all $\lambda$. Conditions $(1)$ and $(3)$ of our assumption say for a place $\lambda_1 \mid \ell_1$ that 
$\rho_{\ell_1} \otimes E_{\lambda_1}(1)$ splits as a sum of the trivial representation and an absolutely irreducible representation. By the 
Tate conjecture, we conclude that $\rho = \mathbf{1}(-1) \oplus \rho'$ (where $\mathbf{1}(-1)$ is the Tate twist of $\mathbf{1}$) with $\rho'$ 
absolutely irreducible. It follows that each $\rho_\ell$ is isomorphic to $\QQ_\ell(-1) \oplus V_\ell$ with $V_\ell$ 
an absolutely irreducible representation of $\Gamma_K$.

Lemma 3.4 of \cite{PVZ} shows that $\rho'$ descends to $\QQ$, and clearly $\mathbf{1}(-1)$ does, hence $\rho$ descends to $\QQ$. 
We have $\Gamma_K$-equivariant pairings $\rho_\ell \otimes \rho_\ell \to \QQ_\ell$, and thus there are pairings 
$\rho_\lambda \otimes_E \rho_\lambda \to E_\lambda $. By the Tate conjecture, there is an isomorphism
	\[ \left(\mathrm{Sym}^2 \rho_E^\vee\right)^{\mGal{K,E}}\otimes_E E_\lambda \cong \left(\mathrm{Sym}^2 \rho_\lambda^\vee
	\right)^{\Gamma_K} \]
Hence we get a non degenerate $\mGal{K,E}$-equivariant pairing $\rho_E \otimes_E \rho_E \to E$. However each local pairing descends 
to $\QQ_\ell$. By Galois descent the map
	\[ \Hom_{\mGal{K}}(\rho\otimes_\QQ \rho, \QQ) \to \Hom_{\mGal{K,E}}(\rho_E \otimes \rho_E, E)^{\Gamma_\QQ} \]
is an isomorphism, and therefore the pairing on $\rho_E$ descends to a pairing $\rho\otimes_\QQ \rho \to \QQ$. 

Now that we have a motivic Galois representation $\rho \colon \mGal{K} \to \GL{22,\QQ}$ whose $\ell$-adic realizations are 
$\rho_\ell$, we can do some comparisons. Let $M \in \cM{K}$ be the corresponding rank 22 motive. 
Objects of $\cM{K}$ enjoy the de Rham comparison theorem of $p$-adic Hodge theory. In particular, for $v$ and $\ell_2$ as in condition $(4)$ 
there are isomorphisms:
	\[ H_{dR}(M) \otimes_K B_{dR, K_v} \xrightarrow{\sim} H_{\ell_2}(M) \otimes_{\QQ_{\ell_2}} B_{dR, K_v} \]
Hence 
	\[ H_{dR}(M)\otimes_K K_v \cong D_{dR, K_v}(H_{\ell_2}(M)) = \left(H_{\ell_2}(M) \otimes_{\QQ_{\ell_2}} B_{dR, K_v}\right)^{\Gamma_{K_v}} \]
By assumption $(4)$ and the comparison isomorphism, the Hodge filtration on $H_{dR}(M)$ satisfies
	\[ \dim_K \mathrm{gr}^i H_{dR}(M) = \begin{cases} 1 & \text{if } i = 0,2, \\ 20 & \text{if } i = 1, \\ 0 & \text{otherwise.} \end{cases} \]
The Betti de-Rham comparison theorem states that 
	\[ H_{dR}(M|_\CC) \cong H_B(M) \otimes_\QQ \CC, \]
so $H_B(M)$ is a Hodge structure of K3 type. It is also a $\QQ$-quadratic space, coming from the motivic pairing. We will show that 
there is an isomorphism of $\QQ$-quadratic spaces $H_B(M) \cong \Lambda \otimes \QQ$ with $\Lambda$ the K3 lattice 
$U^{\oplus 3} \oplus E_8(-1)^{\oplus 2}$. First, note that we have comparison isomorphisms $H_B(M) \otimes \QQ_\ell \cong H_\ell(M)$ 
that respect the pairings on both spaces. By assumption $H_\ell(M) \cong \Lambda \otimes \QQ_\ell$ as quadratic spaces, so to show 
that $H_B(M) \cong \Lambda \otimes \QQ$ it is enough to show that $H_B(M)$ has the same signature as $\Lambda$, which is 
$(3, 19)$. Now $M = \mathbf{1}(-1) \oplus M'$ with $M'$ absolutely irreducible. Thus we have an orthogonal decomposition of Hodge 
structures $H_B(M) = \QQ(-1) \oplus H_B(M')$ with $H_B(M')$ irreducible.

We compute the possible signatures on $H_B(M')$. First, there is a smooth projective $X$ such that $M' \hookrightarrow h^k(X)(j)$ 
for some integers $k,j$ where $h^k(X)(j)$ is the motive whose realization is $H^k(X)(j)$. The pairing on $M'$ is up to a scalar 
multiple the same as the intersection pairing coming from a polarization $L$ on $X$ because 
$\dim_\QQ(\mathrm{Sym}^2 \rho'^\vee)^{\mGal{K}} = 1$. 
Since $M'$ has weight 2, we know that $k$ is even and $k - 2j = 2$. There is a decomposition of motives \cite[Prop 5.2.5.1]{Andre}
	\[ h^k(X) = \bigoplus_{r\leq k} L^r h^{k-2r}_\text{prim}(X)(-r)\]
so $M' \subset L^r h^{k-2r}_\text{prim}(X)(j-r)$ for some $r$ and thus $H_B^{1,1}(M') \subset L^rH^{j+1-r,j+1-r}_\text{prim}(X)$. The 
intersection pairing on this subspace is definite by the Hodge index theorem \cite[Thm 6.32]{Voisin}. By lemma \ref{k3 lattice} the only 
possible signatures of $H_B(M)$ are $(3,19), (7,15), (11,11), (15,7)$ and $(19, 3)$ but since the form is definite on the $19$-dimensional 
subspace $H^{1,1}(M') \subset H_B(M)\otimes \CC$, the signature must be $(3,19)$ or $(19,3)$. The quadratic form restricted to 
$H_B(\mathbf{1}(-1))$ is determined by an element $\alpha$ of $\QQ^\times/(\QQ^\times)^2$. Assumption $(1)$ assures that the image of 
$H_B(\mathbf{1}(-1))$ under the comparison isomorphism
	\[ H_B(\mathbf{1}(-1))\otimes \QQ_\ell \cong H_\ell(\mathbf{1}(-1)) \]
maps to the line spanned by the vector $(e + df) \otimes 1$ in $\Lambda \otimes \QQ_\ell$ (using the notation in theorem \ref{main}). 
The fact that $(e+df)^2 = 2d$ shows that the image of $\alpha$ in $\QQ_\ell^\times/(\QQ_\ell^\times)^2$ is $2d$ for almost all $\ell$. 
Thus by the Grunwald-Wang theorem, we have that $\alpha = 2d$. Consequently, there is an isomorphism of quadratic spaces 
$H_B(\mathbf{1}(-1)) \cong \QQ(2d)$ where the bilinear form on $\QQ(2d)$ is given by $(a,b) = 2dab$. In particular, the pairing on 
$H_B(\mathbf{1}(-1))$ is positive definite. We conclude that the signature on $H_B(M)$ is $(3,19)$ which completes the proof that 
$H_B(M) \cong \Lambda \otimes \QQ$ as quadratic spaces. Let $i \colon \Lambda \hookrightarrow H_B(M)$ be an embedding and write 
$\Lambda(M)$ for the image of $\Lambda$ under this embedding. 

Now $\Lambda(M)$ has an induced $\ZZ$-Hodge structure from that on $H_B(M)$. By surjectivity of the period map (theorem 
\ref{surjectivity of period map}) we know that there is a K3 surface $X$ over $\CC$ with a Hodge isometry 
$H^2(X,\ZZ) \cong \Lambda \subset H_B(M)$. The Hodge conjecture implies that $M|_\CC \cong h^2(X)$ and further that this 
isomorphism respects the pairings on each motive.  For each $\sigma \in \mathrm{Aut}(\CC/\ol{\QQ})$ we have
	\[ h^2(X)^\sigma \cong M|_\CC^\sigma = M|_\CC \cong h^2(X), \]
where $M|_\CC$ is the image of $M$ under base change $\mc{M}_K \to \mc{M}_{\CC}$  and $h^2(X)^\sigma, M|_\CC^\sigma$ are the 
$\sigma$-conjugates of $h^2(X)$ and $M|_\CC$. Since these isomorphisms respect the pairing, upon applying Betti realization we see that 
$X$ is isogenous to each conjugate $X^\sigma$, hence by corollary \ref{K3descent}, $X$ admits a model over $\ol{\QQ}$. We denote this 
model by $X_{\ol{\QQ}}$ and write $X_\CC$ for complex K3 surface above. The map 
	\[ \Hom_{\cM{\ol{\QQ}}}(h^2(X_{\ol{\QQ}}), M|_{\ol{\QQ}}) \to \Hom_{\cM{\CC}}(h^2(X_\CC), M|_\CC) \]
is an isomorphism, and hence there is an isomorphism $\alpha \colon h^2(X_{\ol{\QQ}}) \to M|_{\ol{\QQ}}$ in $\cM{\ol{\QQ}}$. Let $L$ be a 
finite extension of $K$ such that both $X$ and $\alpha$ are defined over $L$. Hence we get an isomorphism 
$\alpha \colon h^2(X) \to M|_L$ in $\cM{L}$ which yields isomorphisms 
	\[ H_\ell(\alpha) \colon H^2(X_{\ol{L}}, \QQ_\ell) \xrightarrow{\sim} H_\ell(M) = \rho|_{\Gamma_{L}} \otimes \QQ_\ell \]
\end{proof}

\section{Descent}
In this section we prove that complex K3 surfaces that are isogenous to all of their $\ol{\QQ}$-conjugates admit models over number fields. This will follow from a general spreading out argument. 
\begin{prop}
Let $L$ be an extension of a characteristic zero field $K$ and $E$ and $F$ subextensions of $L$. Suppose that 
\begin{enumerate}
\item $K$ is algebraically closed in one of $E$ or $F$. 
\item $E$ and $F$ are algebraically disjoint over $K$.
\end{enumerate}
Then $E$ and $F$ are linearly disjoint over $K$. 
\end{prop}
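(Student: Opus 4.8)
The plan is to reduce the statement to the basic fact that a regular field extension remains a domain after tensoring, and then to pin down the kernel of the multiplication map by a transcendence-degree count. Linear disjointness of $E$ and $F$ over $K$ is exactly the injectivity of the canonical $K$-algebra map $\mu\colon E\otimes_K F\to L$, $e\otimes f\mapsto ef$, so this is what I would prove. Since the conclusion is symmetric in $E$ and $F$, I may assume by hypothesis (1) that $K$ is algebraically closed in $E$. Because $\mathrm{char}\,K=0$, the extension $E/K$ is automatically separable, hence regular.

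First I would record the standard characterization of regularity: $E/K$ is regular if and only if $E\otimes_K F'$ is an integral domain for every field extension $F'/K$ (equivalently, $E\otimes_K\ol{K}$ is a domain). Applying this with $F'=F$ shows that $E\otimes_K F$ is a domain. Next I would reduce to the case that $E$ and $F$ are finitely generated over $K$: a dependence relation witnessing failure of linear disjointness involves only finitely many elements of $E$ and finitely many coefficients from $F$, so it suffices to treat finitely generated subextensions $E_0\subseteq E$, $F_0\subseteq F$; moreover both ``$K$ algebraically closed in $E$'' and ``algebraically disjoint over $K$'' pass to such subextensions (any element of $E_0$ algebraic over $K$ lies in $E$, hence in $K$, and subsets of algebraically independent sets remain algebraically independent), so the hypotheses are preserved.

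With $E,F$ finitely generated I would analyze $\mu$ via dimension. Write $a=\mathrm{trdeg}_K E$ and $b=\mathrm{trdeg}_K F$. The image of $\mu$ is the compositum ring $E[F]\subseteq L$, a domain whose fraction field is the compositum $EF$; by algebraic disjointness (hypothesis (2)) one has $\mathrm{trdeg}_K EF=a+b$. On the other hand, choosing a finitely generated $K$-algebra $A$ with $\mathrm{Frac}\,A=E$, the ring $E\otimes_K F$ is a localization of $A\otimes_K F$, and the base-change dimension formula $\dim_F(A\otimes_K F)=\dim_K A=a$ gives $\mathrm{trdeg}_K\mathrm{Frac}(E\otimes_K F)=a+b$. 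The kernel of $\mu$ is a prime $\mathfrak p$ of the domain $E\otimes_K F$ with $\mathrm{Frac}\big((E\otimes_K F)/\mathfrak p\big)=EF$; since finitely generated algebras over a field are catenary, a nonzero prime would force $\mathrm{trdeg}_K EF<\mathrm{trdeg}_K\mathrm{Frac}(E\otimes_K F)=a+b$, contradicting $\mathrm{trdeg}_K EF=a+b$. Hence $\mathfrak p=0$ and $\mu$ is injective, i.e.\ $E$ and $F$ are linearly disjoint over $K$.

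The main obstacle, and the only place requiring genuine input beyond bookkeeping, is the regularity fact that $E\otimes_K F$ is a domain; everything after it is a transcendence-degree count whose one delicate point is ensuring that passing to a nonzero prime strictly drops transcendence degree, which I would justify by the dimension formula for finitely generated domains over a field (via Noether normalization and catenariness). I would keep the finite-generation reduction explicit, since it is precisely what makes these dimension formulas applicable.
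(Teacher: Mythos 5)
Your proof is correct, and it takes a genuinely different route from the paper's. You translate linear disjointness into injectivity of the multiplication map $\mu\colon E\otimes_K F\to L$ and then split the work in two: hypothesis (1) plus characteristic zero makes $E/K$ regular, so $E\otimes_K F$ is a domain (the standard fact that regular extensions are geometrically integral), while hypothesis (2) forces the kernel of $\mu$, a prime ideal, to vanish via the transcendence-degree count $\mathrm{trdeg}_K EF=\mathrm{trdeg}_K E+\mathrm{trdeg}_K F$. The paper instead stays inside elementary field theory: it fixes transcendence bases, uses Bourbaki's results on purely transcendental extensions to reduce to showing $EF_0$ and $E_0F$ are linearly disjoint over $E_0F_0$, proves $E\cap F=K$ by a trace and Newton's-identities argument (this is where it uses characteristic zero), and concludes after passing to normal closures. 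Your route is shorter, makes the role of each hypothesis transparent (regularity is exactly what makes the tensor product a domain, algebraic disjointness is exactly what pins the dimension), and essentially recovers the textbook proof of this statement, which appears in Lang's \emph{Algebra}, Ch.\ VIII: a regular extension is linearly disjoint from any extension free (= algebraically disjoint) from it. The cost is that the geometric-integrality theorem you cite carries most of the depth, whereas the paper's argument is self-contained at the level of traces and minimal polynomials. One point to tighten: the dimension formula $\dim(B/\mathfrak{q})+\mathrm{ht}(\mathfrak{q})=\dim B$ applies to the finitely generated $F$-algebra $A\otimes_K F$, not directly to its localization $E\otimes_K F$, so you should pass from $\mathfrak{p}$ to the prime $\mathfrak{q}=\mathfrak{p}\cap(A\otimes_K F)$ (which is nonzero if $\mathfrak{p}$ is, and whose quotient has the same fraction field $EF$) before invoking it; as written this step is implicit but not stated.
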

\begin{proof}
Let $\{x_i\}_{i\in I}$ be a transcendence basis for $E$ over $K$ and define $E_0 = K(\{x_i\}_{i\in I})$. Likewise let $\{y_j\}_{j \in J}$ be a transcendence basis for $F$ over $K$ and define $F_0 = K(\{y_j\}_{j\in J})$. 

First, notice that $E_0$ and $F_0$ are linearly disjoint over $K$. This follows from the fact that they are algebraically disjoint over $K$, 
and $E_0$ and $F_0$ are purely transcendental extensions of $K$ see by \cite[Ch.\ V, \S 14, prop.\ 14]{bourbaki2013algebra}. 
It also follows from the same proposition that $E$ and $F_0$ are linearly disjoint over $K$, as are $E_0$ and $F$. 
The theorem holds as long as $EF_0$ and $E_0F$ are linearly disjoint over $E_0F_0$, by \cite[Ch.\ V, \S 14, prop.\ 8]{bourbaki2013algebra}. In what follows, we will assume that $K$ is algebraically closed in $E$. 

We will show that $E \cap F = K$. We already know that $E_0 \cap F_0 = K$ as they are linearly disjoint over $K$. Any element 
$\alpha \in E \cap F$ is contained in a finite extension of $E_0$ and $F_0$ so we may assume for now that $E$ is finite over $E_0$ and 
$F$ is finite over $F_0$. We claim that $\Tr{E/E_0}(\alpha) \in K$. Indeed since $\alpha \in EF_0$ and $\alpha \in E_0F$ we have the 
following equalities
\begin{align*}
[EF\colon E_0F] \Tr{E_0F/E_0F_0}(\alpha) &= \Tr{E_0F/E_0F_0}(\Tr{EF/E_0F}(\alpha)) \\
&=\Tr{EF/E_0F_0}(\alpha) \\
&= \Tr{EF_0/E_0F_0}(\Tr{EF/EF_0}(\alpha)) \\
&= [EF\colon EF_0] \Tr{EF_0/E_0F_0}(\alpha) 
\end{align*}
Notice that $\Tr{EF_0/E_0F_0}(\alpha) = \Tr{E/E_0}(\alpha)$ as $EF_0 \cong E\otimes_{E_0} E_0F_0$ (as the two fields are linearly disjoint 
over $E_0$ by the first remarks, and $E$ is algebraic over $E_0$) and trace is invariant under extension of scalars. Therefore 
	\[ \Tr{E/E_0}(\alpha) = \Tr{EF_0/E_0F_0}(\alpha) = \frac{[EF\colon E_0F]}{[EF \colon EF_0]} \Tr{E_0F/E_0F_0} = 
	\frac{[EF\colon E_0F]}{[EF \colon EF_0]} \Tr{F/F_0}(\alpha) \in F_0 \]
and consequently $\Tr{E/E_0}(\alpha) \in E_0 \cap F_0 = K$. However, by the same reasoning $\Tr{E/E_0}(\alpha^n) \in K$ for $n = 0, 1, 2, 
\ldots, [E\colon E_0]$. From Newton's identities, we see that the minimal polynomial of $\alpha$ over $E_0$ has coefficients in $K$, and 
therefore $\alpha$ is algebraic over $K$. But from the assumption that $K$ is algebraically closed in $E$ we see that $\alpha \in K$, as 
required. 

Finally, to show that $EF_0$ and $E_0F$ are linearly disjoint over $E_0F_0$ we may enlarge $E$ and $F$ so that they are normal over 
$E_0$ and $F_0$ respectively. Then $EF_0$ and $E_0F$ are normal over $E_0F_0$ and hence it is enough to show that 
$EF_0 \cap E_0F = E_0F_0$. Suppose $x \in EF_0 \cap E_0F$ and write $x = ef_0 = e_0f$ for $e_0 \in E_0, e \in E, f_0 \in F_0$ and 
$f \in F$. Then $ee_0^{-1} = ff_0^{-1} \in E \cap F = K$. Therefore $e = (ee_0^{-1})e_0 \in KE_0 = E_0$ and therefore $x \in E_0F_0$ as 
required. 
\end{proof}

\begin{cor}\label{fieldtheory}
Let $\Omega$ be an extension of a field $k$ with $\Omega$ and $k$ algebraically closed of characteristic zero. Suppose that $\Omega$ 
has uncountable transcendence degree over $k$. Then for any subextension $E$ of $\Omega$ that is countably generated over $k$, there 
is an element $\sigma \in \mathrm{Aut}(\Omega/k)$ such that $E$ and $\sigma(E)$ are linearly disjoint over $k$
\end{cor}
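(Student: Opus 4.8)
The plan is to prove Corollary~\ref{fieldtheory} by constructing $\sigma \in \mathrm{Aut}(\Omega/k)$ whose image $\sigma(E)$ is algebraically disjoint from $E$ over $k$, and then invoking the preceding Proposition to upgrade algebraic disjointness to linear disjointness. Since $k$ is algebraically closed, $k$ is algebraically closed in every extension, so hypothesis~(1) of the Proposition is automatic; thus the entire content is to arrange hypothesis~(2), namely algebraic disjointness. I would reduce algebraic disjointness to a statement about transcendence bases: $E$ and $\sigma(E)$ are algebraically disjoint over $k$ precisely when a transcendence basis of $E$ together with its $\sigma$-image forms an algebraically independent set over $k$.

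First I would fix a transcendence basis $B = \{x_i\}_{i \in I}$ of $E$ over $k$; since $E$ is countably generated over $k$, the index set $I$ is countable. Because $E$ is algebraic over $k(B)$, it suffices to find $\sigma$ so that $B \cup \sigma(B)$ is algebraically independent over $k$ (algebraic disjointness of the purely transcendental subfields then propagates to their algebraic extensions $E$ and $\sigma(E)$). Now I would use the cardinality hypothesis: $\Omega$ has uncountable transcendence degree over $k$, while $B$ is countable, so I can choose a countable set $C = \{y_i\}_{i \in I}$ inside $\Omega$ that is algebraically independent over $k(B)$ — that is, $B \cup C$ is algebraically independent over $k$. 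The key step is then to produce an automorphism of $\Omega$ over $k$ sending $x_i \mapsto y_i$: both $B$ and $C$ are algebraically independent over $k$ of the same (countable) cardinality, so there is a $k$-isomorphism $k(B) \xrightarrow{\sim} k(C)$, and because $\Omega$ is algebraically closed of infinite transcendence degree over each of these subfields, this isomorphism extends to an automorphism $\sigma$ of $\Omega$ fixing $k$. This uses the standard fact that an algebraically closed field is determined up to isomorphism over a subfield by its transcendence degree, applied to extend the isomorphism across matching transcendence bases of $\Omega$ over $k(B)$ and over $k(C)$.

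With this $\sigma$ in hand, $\sigma(B) = C$ and $B \cup \sigma(B) = B \cup C$ is algebraically independent over $k$, so $E$ and $\sigma(E)$ are algebraically disjoint over $k$. Applying the Proposition (with $K = k$ and $L = \Omega$) gives that $E$ and $\sigma(E)$ are linearly disjoint over $k$, as required. The main obstacle, and the point deserving the most care, is the extension step: one must verify that the transcendence degrees of $\Omega$ over $k(B)$ and over $k(C)$ agree (both equal the transcendence degree of $\Omega$ over $k$, since $B$ and $C$ are each countable and the total degree is uncountable) so that the isomorphism $k(B) \cong k(C)$ genuinely extends to all of $\Omega$; this is precisely where the uncountability hypothesis is essential, since a countable shift in the transcendence basis must not change the residual transcendence degree. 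I would therefore state explicitly that $\mathrm{trdeg}(\Omega/k(B)) = \mathrm{trdeg}(\Omega/k(C)) = \mathrm{trdeg}(\Omega/k)$ before invoking the isomorphism-extension lemma for algebraically closed fields.
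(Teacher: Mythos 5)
Your proposal is correct and follows essentially the same route as the paper: choose a transcendence basis of $E$, pick fresh algebraically independent elements in $\Omega$, send one to the other by an automorphism over $k$, and invoke the Proposition (with hypothesis~(1) automatic since $k$ is algebraically closed) to pass from algebraic to linear disjointness. The only difference is that you carefully justify the existence of $\sigma$ via the isomorphism-extension argument for algebraically closed fields, a step the paper simply asserts.
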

\begin{proof}
Let $x_1, x_2, \ldots$ be a transcendence basis of $E$ over $k$, and let $y_1, y_2, \ldots $ be any elements of $\Omega$ such that the 
collection $\{x_1, x_2, \ldots, y_1, y_2, \ldots\}$ are algebraically independent over $k$. Let $\sigma \in \mathrm{Aut}(\Omega/k)$ be any 
element satisfying $\sigma(x_i) = y_i$. By construction the fields $E$ and $\sigma(E)$ are algebraically disjoint over $k$. As $k$ is 
algebraically closed, the previous theorem shows $E$ and $\sigma(E)$ are linearly disjoint over $k$. 
\end{proof}

\begin{lem}\label{spreading out}
Let $\Omega$ be an extension of a field $k$ with $\Omega$ and $k$ algebraically closed of characteristic zero, and $\Omega$ of 
uncountable transcendence degree over $k$. Let $X$ be a variety over $\Omega$ whose conjugates $X^\sigma$ for $\sigma \in 
\mathrm{Aut}(\Omega/k)$ are contained in a countable set. Then $X$ admits a model over a number field. 
\end{lem}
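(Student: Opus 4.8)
My plan is to spread $X$ out into a family over a $k$-variety and to show that this family is isotrivial, so that a fiber over a $k$-rational point furnishes the desired model. First I would choose a finitely generated subextension $E \subseteq \Omega$ of $k$ over which $X$ has a model $X_0$, so that $X \cong X_0 \otimes_E \Omega$; writing $E = k(U)$ for an integral affine $k$-variety $U$ with $\operatorname{trdeg}_k E = \dim U =: n$, I would spread $X_0$ out to a family $f \colon \mathcal X \to U$ whose generic fiber is $X_0$. The key translation is that base-changing $X_0$ along a $k$-embedding $\iota \colon E \hookrightarrow \Omega$ computes a fiber of $f$: if $u \in U(\Omega)$ is the point induced by $\iota$ (so $u$ lies over the generic point of $U$), then $\mathcal X_u \cong X_0 \otimes_{E,\iota}\Omega$. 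Since $\Omega$ is algebraically closed of transcendence degree exceeding that of $E$ over $k$, every such $\iota$ extends to an automorphism $\sigma \in \operatorname{Aut}(\Omega/k)$ (the homogeneity of $\Omega$ underlying Corollary \ref{fieldtheory}), and then $\mathcal X_u \cong X^\sigma$. Thus every fiber of $f$ over a point of $U(\Omega)$ generic over $k$ is one of the conjugates $X^\sigma$, and by hypothesis these fall into only countably many isomorphism classes.

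Next I would consider the isotriviality locus
\[ Z = \{ (u,u') \in U \times U : \mathcal X_u \cong \mathcal X_{u'} \} \subseteq U \times U, \]
which is constructible, being the image under the (finite-type) projection of the Isom-scheme $\underline{\operatorname{Isom}}_{U\times U}(\mathrm{pr}_1^*\mathcal X, \mathrm{pr}_2^*\mathcal X) \to U\times U$, so that Chevalley's theorem applies. If $Z = U\times U$, then all $\Omega$-fibers of $f$ are isomorphic; choosing any $u_0 \in U(k)$ (possible since $k$ is algebraically closed and $U \neq \emptyset$), the fiber $\mathcal X_{u_0}$ is a variety over $k$ with $\mathcal X_{u_0}\otimes_k\Omega \cong X$, which is exactly a model of $X$ over $k$. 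In the situation of interest $k = \overline{\QQ}$, so such a model is cut out by finitely many equations with coefficients in $\overline{\QQ}$, whose coefficients generate a number field; hence $X$ is defined over a number field, as required.

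It remains to rule out $Z \subsetneq U \times U$, which I expect to be the main obstacle. We may assume $n \geq 1$, since if $n = 0$ then $E$ is algebraic over $k = \overline k$ and $X$ already descends to $k$. As $U\times U$ is integral (here $k$ algebraically closed is used), if $Z \neq U\times U$ then its complement $W$ is a nonempty constructible set containing the generic point, hence containing a dense open of $U\times U$. I would then construct, by transfinite recursion of uncountable length, points $u_\alpha \in U(\Omega)$ with each $u_\alpha$ generic over the countably generated subfield of $\Omega$ generated by $k$ together with all previously chosen points; the uncountable transcendence degree of $\Omega$ over $k$ is precisely what guarantees that such a point exists at every countable stage, and this general-position input is the content of Corollary \ref{fieldtheory} and the Proposition preceding it. For $\beta < \alpha$ the pair $(u_\beta,u_\alpha)$ is then a generic point of $U\times U$, so it lies in $W$ and $\mathcal X_{u_\alpha}\not\cong\mathcal X_{u_\beta}$; on the other hand each $\mathcal X_{u_\alpha}$ is a conjugate of $X$ by the first paragraph. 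This produces uncountably many pairwise non-isomorphic conjugates, contradicting the countability hypothesis, so in fact $Z = U\times U$. The delicate points requiring care are the constructibility of $Z$ (quasi-projectivity of the fibers and finite presentation of the Isom-scheme) and the verification that genericity over the growing field simultaneously realizes each $\mathcal X_{u_\alpha}$ as a conjugate and forces the pairs $(u_\beta,u_\alpha)$ into $W$.
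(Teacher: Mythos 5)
Your overall strategy (spread out, compare fibers, use countability against an uncountable supply of independent generic points) is genuinely different from the paper's, which is pure field theory; but as written your proof has a gap in the case analysis, and it occurs at exactly the decisive point. For a constructible $Z \subseteq U\times U$ the correct dichotomy is \emph{not} ``$Z = U\times U$ or the complement of $Z$ contains a dense open'': $Z$ can perfectly well be a proper subset that contains a dense open (nothing forces more than the diagonal into $Z$ beyond that), and then neither of your alternatives holds. This intermediate case genuinely occurs --- blow up a constant family $Y\times U$ at a point of one fiber: all fibers away from that point are isomorphic to $Y$, the special fiber is not, so $Z$ is dense constructible but $Z \neq U\times U$ --- and morally it is the \emph{main} case, since the countability hypothesis is only ever going to force generic pairs of fibers to be isomorphic, never literally all pairs. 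In that case your step concluding the proof fails: a pair $(u_0,u)$ with $u_0 \in U(k)$ is not generic, so you cannot conclude $\mathcal X_{u_0}\cong \mathcal X_u$, and your transfinite recursion yields no contradiction because the generic pairs $(u_\beta,u_\alpha)$ land in $Z$, not in its complement. Closing this case requires a real descent ingredient that your proposal never supplies: either (i) the fact that a variety admitting models over two linearly disjoint finitely generated extensions of $k$ admits a model over $k$ \cite[Lem.\ 16.28]{Milne}, applied to $\mathcal X_u \cong \mathcal X_{u'}$ for independent generic $u,u'$, whose fields of definition $u(E)$ and $u'(E)$ are algebraically disjoint, hence linearly disjoint over the algebraically closed $k$ by the proposition preceding corollary \ref{fieldtheory}; or (ii) an argument using that $Z$ is an equivalence relation on $U(\Omega)$, plus irreducibility of $U$, to upgrade ``$Z$ contains a dense open'' to ``$Z \supseteq U'(\Omega)\times U'(\Omega)$ for a dense open $U'\subseteq U$,'' after which your $k$-point argument does apply. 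Ingredient (i) is precisely the engine of the paper's proof, which avoids all geometry: the countably many conjugates all admit models over a single countably generated field $K/k$; corollary \ref{fieldtheory} gives $\tau$ with $\tau(K)$ and $K$ linearly disjoint over $k$; then $X^\tau$ has models over the two linearly disjoint finitely generated fields $\tau(K_i)$ and $K_j$, hence a model over $k$.

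A second, lesser problem is your constructibility claim for $Z$. The lemma concerns an arbitrary variety $X$, for which $\underline{\operatorname{Isom}}$ is not representable by a finite type scheme; even for projective families it is only a countable union of quasi-projective pieces (indexed by Hilbert polynomials of graphs), so $Z$ is a priori only a countable union of constructible sets and Chevalley does not directly apply. This part is repairable: your recursion really only needs to know whether the generic point of $U\times U$ lies in $Z$, and membership of a pair of $\Omega$-points in $Z$ depends only on its image in $|U\times U|$ (any two pairs over the generic point differ by an element of $\operatorname{Aut}(\Omega/k)$), so no Isom scheme is needed for that half; but you flag the issue without resolving it, and combined with the false dichotomy above the proof as written does not go through.
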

\begin{proof}
We can choose $\sigma_1, \sigma_2 \ldots \in \mathrm{Aut}(\Omega/k)$ so that every conjugate of $X$ is isomorphic to $X^{\sigma_i}$ 
for some $i$. For each $i$, $X^{\sigma_i}$ is defined over a finitely generated field extension $K_i$ over $k$. Let $K \subset \Omega$ be 
the composite in $\Omega$ of $K_1, K_2 \ldots$ so that $X^{\sigma_1}, X^{\sigma_2}, \ldots$ (hence any conjugate of $X$) admit models 
over $K$. Note that $K$ countably generated over $k$.  Let $\tau \in \mathrm{Aut}(\Omega/k)$ be any automorphism with $\tau(K)$ and 
$K$ linearly disjoint over $k$ which exists by corollary \ref{fieldtheory}. Suppose $X_0$ is a model of $X$ over $K_i$ for some $i$. As 
$X^\tau$ is isomorphic (over $\Omega$) to $X^{\sigma_j}$ for some $j$, we know that $X_0^\tau$ is a model of $X^{\sigma_j}$ over 
$\tau(K_i)$. Thus $X^{\sigma_j}$ admits a model over $K_j$ and $\tau(K_i)$. These are finitely generated and linearly disjoint over $k$, so 
$X^{\sigma_j}$ admits a model over $k$ see \cite[Lem.\ 16.28]{Milne}. Hence $X$ admits a model over $k$.
\end{proof}

\begin{cor}\label{K3descent}
If $X$ is a complex K3 surface and $X$ is isogenous to $X^\sigma$ for all $\sigma \in \mathrm{Aut}(\CC/\ol{\QQ})$ then $X$ admits a model 
over $\ol{\QQ}$.
\end{cor}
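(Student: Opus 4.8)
The plan is to deduce this directly from Lemma \ref{spreading out}, applied with $\Omega = \CC$ and $k = \ol{\QQ}$. First I would verify the standing hypotheses of that lemma: both $\CC$ and $\ol{\QQ}$ are algebraically closed of characteristic zero, and $\CC$ has uncountable transcendence degree over $\ol{\QQ}$ (indeed its transcendence degree over $\QQ$, and hence over $\ol{\QQ}$, is the cardinality of the continuum). These are immediate, so the only hypothesis carrying real content is that the conjugates $\{ X^\sigma : \sigma \in \mathrm{Aut}(\CC/\ol{\QQ}) \}$ are contained in a countable set of isomorphism classes.

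To check this I would argue as follows. Being a K3 surface is cut out by conditions stable under base change along a field automorphism, so each conjugate $X^\sigma$ is again a complex K3 surface. By the hypothesis of the corollary, $X$ is isogenous to $X^\sigma$ for every $\sigma$; since isogeny is an equivalence relation on complex K3 surfaces given by a $\QQ$-Hodge isometry of second cohomology, this places every $X^\sigma$ in the isogeny class $\mc{S}(X)$ of $X$ in the sense of Lemma \ref{isogeny-class-corrected}. That lemma asserts precisely that $\mc{S}(X)$ is countable, so all the conjugates $X^\sigma$ lie in this single countable set, which is exactly the remaining hypothesis of Lemma \ref{spreading out}.

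With every hypothesis verified, Lemma \ref{spreading out} produces a model of $X$ over $k = \ol{\QQ}$ (equivalently, a model over a number field $F \subset \ol{\QQ}$, which after base change to $\ol{\QQ}$ gives a model over $\ol{\QQ}$), which is the desired conclusion. The main obstacle in this argument is entirely the countability of the isogeny class, and that is packaged for us in Lemma \ref{isogeny-class-corrected}; everything else is a routine matching of hypotheses. The one point I would take care over is that the definition of $\mc{S}(X)$ used in Lemma \ref{isogeny-class-corrected}, namely the set of isomorphism classes of complex K3 surfaces isogenous to $X$, agrees with the isogeny relation appearing in the hypothesis here, so that the conjugates genuinely land in the countable set provided by that lemma.
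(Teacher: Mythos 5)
Your proposal is correct and matches the paper's own proof: both deduce the corollary by noting that the hypothesis places every conjugate $X^\sigma$ in the isogeny class $\mc{S}(X)$, invoking Lemma \ref{isogeny-class-corrected} for countability of $\mc{S}(X)$, and then applying Lemma \ref{spreading out} with $\Omega = \CC$ and $k = \ol{\QQ}$. Your explicit verification of the transcendence-degree and algebraic-closure hypotheses is just a more careful spelling-out of what the paper leaves implicit.
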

\begin{proof}
By assumption $X^\sigma \in \mc{S}(X)$ where $\mc{S}(X)$ is the isogeny class of $X$. From lemma \ref{isogeny-class-corrected}  
$\mc{S}(X)$ is countable and by the previous lemma it follows that $X$ has a model over $\ol{\QQ}$.
\end{proof}

\printbibliography
\end{document}